\definecolor{cerulean}{rgb}{0.0, 0.48, 0.65}
\newcommand{\R}{\mathbb R}
\newcommand{\N}{\mathbb N}
\renewcommand{\d}{\mathrm{d}}
\DeclareMathOperator{\sinc}{sinc}
\DeclareMathOperator{\arccot}{arccot}
\newtheorem{theorem}{Theorem}[section]
\newtheorem{lemma}[theorem]{Lemma}
\newtheorem{prop}[theorem]{Proposition}
\theoremstyle{remark}
\newtheorem{example}[theorem]{Example}
\newtheorem{remark}[theorem]{Remark}
\title[Legendrian approximation of curves]{Convex Integration and Legendrian Approximation of Curves}
\author[N.~Hungerb\"uhler, T.~Mettler \& M.~Wasem]{Norbert Hungerb\"uhler, Thomas Mettler and Micha Wasem}
\address{Department of Mathematics, ETH Z\"urich, R\"amistrasse 101,
    8092 Z\"urich, Switzerland}
    \email{norbert.hungerbuehler@math.ethz.ch}
    \email{micha.wasem@math.ethz.ch}
\address{Institute for Mathematics, Goethe University Frankfurt, Robert-Mayer-Str.~10, 60325 Frankfurt am Main, Germany}\email{mettler@math.uni-frankfurt.de}
\date{April 30, 2015}
\begin{document}

\begin{abstract}
Using convex integration we give a constructive proof of the well-known fact that every continuous curve in a contact $3$-manifold can be approximated by a Legendrian curve. 
\end{abstract}
\maketitle
\section{Introduction}

A~\textit{contact structure} on a $3$-manifold $M$ is a maximally non-integrable rank $2$ subbundle $\xi$ of the tangent bundle of $M$. If $\alpha$ is a $1$-form on $M$ whose kernel is $\xi$, then $\xi$ is a contact structure if and only if $\alpha\wedge\d\alpha\neq 0$. A curve $\eta$ in a contact $3$-manifold $(M,\xi)$ is called~\textit{Legendrian}, whenever $\eta^*\alpha=0$ for some (local) $1$-form $\alpha$ defining $\xi$. \\

The purpose of this note is to give a detailed proof of the following statement which is often used in contact geometry and Legendrian knot theory.

\begin{theorem}\label{main}
Any continuous map from a compact $1$-manifold to a contact $3$-manifold  can be approximated by a Legendrian curve in the $C^{0}$-Whitney topology.
\end{theorem}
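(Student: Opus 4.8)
The plan is to reduce the statement to a local normal form via Darboux's theorem and then to run a one-dimensional convex-integration (corrugation) argument in which the contact condition supplies the room that the naive differential relation lacks.

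First I would reduce to a convenient model. A compact $1$-manifold is a finite disjoint union of circles and closed intervals, and since the image of the given map $\gamma$ is compact, the $C^{0}$-Whitney topology on maps into $M$ restricts to uniform approximation with a fixed tolerance $\epsilon>0$; so it suffices to treat one component at a time and to produce a uniformly $\epsilon$-close Legendrian curve with prescribed behaviour at the endpoints (for gluing). Mollifying, I may assume $\gamma$ is smooth (a smooth $\epsilon/2$-approximation followed by a Legendrian $\epsilon/2$-approximation). By Darboux's theorem the contact structure is locally the kernel of $\alpha=\d z-y\,\d x$ on $\R^{3}$; covering the (compact) image of $\gamma$ by finitely many such charts and subdividing the parameter interval, I reduce to the following problem: given a smooth curve $\gamma=(x_{0},y_{0},z_{0})\colon[a,b]\to\R^{3}$ and $\epsilon>0$, construct a Legendrian $\eta=(x,y,z)$ with $\|\eta-\gamma\|_{C^{0}}<\epsilon$ and controlled $1$-jet at $a,b$.

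The core observation is that in these coordinates $\eta$ is Legendrian exactly when $\dot z=y\,\dot x$, i.e.\ $z(t)=z(a)+\int_{a}^{t}y\,\d x$. Hence it is enough to choose a planar path $(x,y)$ that stays $C^{0}$-close to $(x_{0},y_{0})$ and for which the (non-exact!) integral $\int_{a}^{t}y\,\d x$ stays $C^{0}$-close to $z_{0}(t)-z_{0}(a)$; one then \emph{defines} $z$ by that integral, so that $\eta$ is Legendrian by construction and its $z$-coordinate is automatically close to $z_{0}$. I would achieve this by superposing on $(x_{0},y_{0})$ a rapid corrugation of frequency $N$ tracing small loops of amplitude $\rho(t)\to0$. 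The decisive computation is Stokes' theorem, $\oint y\,\d x=\iint \d y\wedge \d x$: a single small loop contributes to $\int y\,\d x$ its enclosed signed area. Prescribing the signed-area-rate of the loops to equal the deficit $w:=\dot z_{0}-y_{0}\dot x_{0}$ (the amount by which $\gamma$ fails to be Legendrian) yields $\int_{a}^{t}y\,\d x=\int_{a}^{t}(\,y_{0}\dot x_{0}+w\,)\,\d s+o(1)=z_{0}(t)-z_{0}(a)+o(1)$, the error being $O(1/N)+O(\rho)$ from the oscillatory cross terms. Circular loops of radius $\rho(t)=\sqrt{|w(t)|/(\pi N)}$ with orientation $\operatorname{sign}w(t)$ keep $(x,y)$ within $O(1/\sqrt N)$ of $(x_{0},y_{0})$ while realising the required transverse drift.

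This is precisely where the contact hypothesis enters, and it is also the main obstacle. The Legendrian relation $\dot\eta\in\xi_{\eta}$ constrains the velocity to a $2$-plane, whose convex hull is not all of $T_{\eta}M$; the relation is therefore \emph{not ample}, and one cannot weakly approximate the transverse part of $\dot\gamma$ by averaging admissible velocities in the usual convex-integration manner. The resolution is that only $C^{0}$-closeness is demanded, so it suffices to reproduce displacements rather than velocities, and non-integrability converts small $\xi$-loops into genuine transverse motion: $\d(y\,\d x)=\d y\wedge\d x=-\d\alpha$ is non-degenerate on $\xi$ precisely because $\alpha\wedge\d\alpha\neq0$, so loops of arbitrarily small amplitude enclose nonzero flux and can realise any required $z$-correction. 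The delicate point is the quantitative balance — shrinking $\rho\to0$ (for $C^{0}$-closeness of $x,y$, and hence of $z$ via the estimate) while sending $N\to\infty$ fast enough that the accumulated signed area still matches $\int w$ — together with handling the sign changes of $w$ by reorienting the loops. Finally I would glue: arranging $\rho$ and the loop phase so that at the subdivision points $\eta$ takes prescribed values and has velocity in $\xi$, the local Legendrian arcs concatenate to a global $C^{1}$ Legendrian curve; for a circle one checks the construction closes up, which it does since the total required signed area $\oint w=-\oint y_{0}\,\d x_{0}$ is finite and may be distributed freely along the loop, so no global obstruction arises.
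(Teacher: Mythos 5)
Your route is genuinely different from the paper's, and it is in fact the \emph{Lagrangian-projection} argument that the paper's introduction attributes to Etnyre and Geiges: choose a planar curve $(x,y)$ close to $(x_0,y_0)$, realize the needed $z$-displacement as signed area via Stokes, and \emph{define} $z$ by the integral so that the result is Legendrian by construction. The paper deliberately takes the other projection: it chooses the front $(a,c)$, recovers the missing coordinate as the slope $b=\dot c/\dot a$, and observes that the resulting relation on $(\dot a,\dot c)$, namely $|\dot c-y(t)\dot a|\leqslant\varepsilon\min\{|\dot a|,|\dot a|^2\}$, \emph{is} ample. This converts the problem into honest one-dimensional convex integration with an explicit loop family and a clean $O(1/n)$ estimate, and smoothness of the output is automatic because the corrugation enters the relation linearly. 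Your approach buys a transparent geometric mechanism (non-integrability $=$ enclosed area) and avoids the immersion condition $\dot a\neq 0$; the paper's approach buys explicit formulas, $C^\infty$ regularity, and machinery (barycenter corrections) that carries over verbatim to the closing-up and gluing steps.

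The concrete gap in your sketch is at the zeros of $w=\dot z_0-y_0\dot x_0$. Circular loops of radius $\rho(t)=\sqrt{|w(t)|/(\pi N)}$ are only continuous there: at a simple zero of $w$ the function $\rho$ is H\"older-$1/2$, the difference quotients of $\rho(t)\cos(2\pi Nt)$ diverge, so the corrugated curve is not $C^1$ (while the theorem requires $\eta^*\alpha=0$ to make sense, and the paper produces a $C^\infty$ curve), and your oscillatory error estimates fail as well, since they integrate by parts against $(y_0\rho)'$ and hence need a bound on $\|\rho\|_{C^1}$. ``Reorienting the loops'' does not repair this: the reorientation is forced precisely at the sign changes of $w$, and with a square-root radius it is exactly the shrink-to-a-point transition that destroys regularity. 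Note that this is not a degenerate scenario — for the paper's own helix example one has $w(t)=4\cos(5t)$, which changes sign ten times. The standard repair is to make the enclosed area \emph{linear} in $w$ rather than quadratic in the amplitude, e.g.\ elliptical loops $x=x_0+\lambda\cos(2\pi Nt)$, $y=y_0-\frac{w}{\pi\lambda N}\sin(2\pi Nt)$ with fixed $\lambda\sim N^{-1/2}$, whose area rate is $w$ and which are smooth wherever $w$ is; this is, in disguise, what the paper's explicit loop family does, since there the amplitude of $b-y$ is proportional to $\dot z-y\dot x$. Separately, your gluing and closing-up steps are only asserted: matching values at subdivision points, and closure on $S^1$, require the area integral to hit its target \emph{exactly}, not approximately, so one must correct by adjusting the area of finitely many loops — this exact bookkeeping is what the paper's $\mathrm{I}_2 f$ correction and its entire gluing section are for.
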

Whereas this theorem is a special case of Gromov's $h$-principle for Legendrian immersions~\cite{gromov}, the curve-case can be treated by more elementary techniques. Sketches of proofs of Theorem~\ref{main} have already appeared in the literature, see for example \cite[p.6-7]{etnyre}, \cite[p.40]{geiges} or \cite[p.102]{geiges2}. Exploiting the fact that every contact $3$-manifold is locally contactomorphic to $\R^3$ equipped with the standard contact structure defined by $\alpha=\d z -y \d x$, Etnyre and Geiges indicate that either the \emph{front-projection} $(x,z)$ of a given curve $(x,y,z)$ can be approximated by a zig-zag-curve whose slope approximates the $y$-component of the curve   or the \emph{Lagrangian projection} $(x,y)$ can be approximated by a curve whose area integral approximates the $z$ component of the curve, which can be achieved by adding small negatively or positively oriented loops.\\

Here, we give a different and analytically rigorous proof of Theorem~\ref{main} by using convex integration. Our proof has the advantage of providing a constructive approximation. In particular, in the case of a continuous curve in $\R^3$ equipped with the standard contact structure, we obtain an explicit Legendrian curve given in terms of an elementary integral.    For instance, we obtain an explicit solution to the ``parallel parking problem'' in Example~\ref{parallel}. Example~\ref{helix} shows how our technique recovers the zig-zag-curves and the small loops in the front - respectively Lagrangian projections.

\section{Proof of the Theorem}

We start by first treating the case where the contact manifold is $\R^3$ equipped with the standard contact structure, that is, we aim to prove the following:
\begin{prop}\label{ruedi}
Let $\upsilon\in C^0([0,2\pi],\R^3)$. For every $\varepsilon>0$ there exists a Legendrian curve $\eta\in C^\infty([0,2\pi],\R^3)$ such that $\|\upsilon-\eta\|_{C^0([0,2\pi])}\leqslant\varepsilon$. 
\end{prop}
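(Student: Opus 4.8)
The plan is to exploit that the standard contact form is $\alpha=\d z-y\,\d x$, so that $\eta=(x,y,z)$ is Legendrian precisely when $\dot z=y\,\dot x$ on $[0,2\pi]$. First I would dispose of the low regularity: mollifying $\upsilon$ (or invoking Weierstrass) produces a smooth $u=(u_1,u_2,u_3)$ with $\|\upsilon-u\|_{C^0}\le\varepsilon/2$, so by the triangle inequality it suffices to $\tfrac{\varepsilon}{2}$-approximate a \emph{smooth} target $u$ by a Legendrian curve. The decisive structural observation is that the Legendrian condition can be solved by a quadrature: if one prescribes smooth functions $x$ and $y$ and then \emph{defines}
\[
 z(t):=u_3(0)+\int_0^t y\,\dot x\,\d s,
\]
the resulting curve $\eta=(x,y,z)$ is automatically smooth and \emph{exactly} Legendrian. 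Thus the whole problem collapses to a planar one: choose smooth $x\approx u_1$ and $y\approx u_2$ in $C^0$ such that $\int_0^t y\,\dot x\,\d s\approx u_3(t)-u_3(0)$.

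The naive choice $x=u_1,\ y=u_2$ fails because in general $\int_0^t u_2\dot u_1\,\d s\neq u_3(t)-u_3(0)$; I record the defect $d:=\dot u_3-u_2\,\dot u_1$, which is exactly the average rate of $z$ that must be manufactured. This is where convex integration enters: I would insert fast, small oscillations (small loops in the Lagrangian projection $(x,y)$) whose averaged contribution to $y\,\dot x$ reproduces $d$. Concretely, for a large parameter $N$ and an amplitude $\delta=\delta(N)\to 0$ I set
\[
 y(t):=u_2(t)+\delta\cos(Nt),\qquad \dot x(t):=\dot u_1(t)+\frac{2d(t)}{\delta}\cos(Nt),
\]
integrate the second relation with $x(0)=u_1(0)$ to obtain $x$, and finally define $z$ by the quadrature above. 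Since $\cos^2$ has mean $\tfrac12$, the fast average of $y\,\dot x$ is $u_2\dot u_1+2d\cdot\tfrac12=\dot u_3$, which is precisely what makes $z$ track $u_3$, and the construction yields $\eta$ as an explicit elementary integral.

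To close the argument I would choose the scaling $\delta=N^{-1/2}$, so that $\delta\to0$ while $\delta N\to\infty$, and then verify three uniform estimates as $N\to\infty$. Immediately $\|y-u_2\|_{C^0}=\delta\to0$. For $x$, the term $x-u_1=\int_0^t\frac{2d}{\delta}\cos(Ns)\,\d s$ is handled by one integration by parts, giving a bound of order $(\delta N)^{-1}\to0$. Expanding the integrand of $z$ as $y\,\dot x=\dot u_3+\big(\text{terms oscillating at frequency }N\text{ or }2N\big)$ and integrating, the resonant part yields exactly $u_3(t)-u_3(0)$ while every oscillatory remainder is $O((\delta N)^{-1})+O(N^{-1})$, again by integration by parts (a Riemann--Lebesgue estimate); hence $\|z-u_3\|_{C^0}\to0$. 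With $\delta=N^{-1/2}$ all three errors are in fact $O(N^{-1/2})$, so choosing $N$ large makes $\|u-\eta\|_{C^0}\le\varepsilon/2$, and the triangle inequality finishes the proof.

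The hard part is the tension built into the construction: changing the average of $y\,\dot x$ forces the oscillations of $y$ and of $\dot x$ to be correlated, yet $y$ must converge uniformly to $u_2$ (pushing its amplitude $\delta$ to $0$) while $x$ must converge uniformly to $u_1$ (which naively forbids the large amplitude $2d/\delta$ I am putting into $\dot x$). The resolution---and the analytic heart of the proof---is the scaling $\delta\to0$ with $\delta N\to\infty$: the large amplitude in $\dot x$ is damped after integration by the high frequency $N$, keeping $x$ close to $u_1$, whereas the product of the two oscillations retains an $O(1)$ mean because their amplitudes multiply to $O(1)$. I expect the only genuine work to lie in making the oscillatory-integral bounds uniform in $t\in[0,2\pi]$, which becomes routine once this scaling is fixed.
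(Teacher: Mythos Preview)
Your argument is correct. The oscillatory ansatz $y=u_2+\delta\cos Nt$, $\dot x=\dot u_1+\tfrac{2d}{\delta}\cos Nt$ with $\delta=N^{-1/2}$ does exactly what you claim: the product $y\dot x$ has non-oscillatory part $u_2\dot u_1+d=\dot u_3$, while every remaining term oscillates at frequency $N$ or $2N$ with amplitude at worst $O(\delta^{-1})$, so one integration by parts bounds all three errors by $O((\delta N)^{-1})+O(\delta)=O(N^{-1/2})$. Nothing is missing.

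It is worth noting, however, that your decomposition is dual to the paper's. The paper works in the \emph{front projection}: it prescribes the pair $(a,c)$ by a corrugation $(\dot a,\dot c)(t)=\gamma(t,nt)$ whose values stay in the cone $\mathcal R_{t,\varepsilon}=\{|v-y(t)u|\le\varepsilon\min(|u|,|u|^2)\}$, and then \emph{derives} the middle component as the slope $b=\dot c/\dot a$; the convex-integration input is an explicit loop $\gamma(t,\cdot)\subset\mathcal R_{t,\varepsilon}$ with barycenter $(\dot x,\dot z)$. You instead work in the \emph{Lagrangian projection}: you prescribe $(x,y)$ with small loops and \emph{derive} $z=\int y\,\dot x$ by quadrature. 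Under the identification $r\leftrightarrow\delta^{-1}$, $n\leftrightarrow N$, the scalings match (the paper's $n\sim r^2$ is your $\delta\sim N^{-1/2}$), so the two constructions are close cousins. Your route is pleasantly elementary---no ample set, no barycenter lemma, no quotient $\dot c/\dot a$ to worry about---while the paper's formulation has the advantage that the constraint ``$(\dot a,\dot c)\in\mathcal R_{t,\varepsilon}$'' is a pointwise differential relation, which makes the subsequent closed-curve and chart-gluing arguments fit cleanly into the convex-integration framework.
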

\begin{remark}
Here, as usual, $\|\gamma\|_{C^0(I)}\coloneqq \sup_{t\in I}|\gamma(t)|$ and $\|\gamma\|_{C^1(I)}\coloneqq \|\gamma\|_{C^0(I)}+\|\d \gamma\|_{C^0(I)}$.
\end{remark}

Let the curve we wish to approximate be given by $(x,y,z)\in C^\infty([0,2\pi],\R^3)$. The regularity is no restriction due to a standard approximation argument using convolution. Let $\eta=(a,b,c) \in C^\infty([0,2\pi],\R^3)$ denote the approximating Legendrian curve. For every choice of smooth functions $(a,c) \in C^\infty([0,2\pi],\R^2)$ satisfying $\dot{a}\neq 0$, we obtain a Legendrian curve by defining $b=\dot{c}/\dot{a}$. Therefore, if $(\dot{a}(t),\dot{c}(t))$ lies in the set 
$$
\mathcal R_{t,\varepsilon}\coloneqq\left\{(u,v)\in\R^2,|v-y(t)u|\leqslant \varepsilon \min\{|u|,|u|^2\}\right\},
$$
for every $t \in [0,2\pi]$, then $\|b-y\|_{C^{0}([0,2\pi])}\leqslant \varepsilon$. This condition can be achieved by defining  
$$
(a(t),c(t)):=(x(0),z(0))+\int_0^t \gamma(u,nu)\,\mathrm du,
$$
with $\gamma\in C^\infty([0,2\pi]\times S^1,\R^2)$ and $n \in \mathbb{N}$, provided that $\gamma(t,\cdot)\in \mathcal R_{t,\varepsilon}$. Furthermore, if $\gamma$ additionally satisfies
$$
\frac{1}{2\pi}\oint_{S^1}\gamma(t,s)\,\mathrm ds = (\dot x(t), \dot z(t)),
$$
for all $t \in [0,2\pi]$, then --  as we will show below -- $(a(t),c(t))$ approaches $(x(t),z(t))$ as $n$ gets sufficiently large. \\

The set $\mathcal R_{t,\varepsilon}$ is \emph{ample}, i.e., the interior of its convex hull is all of $\R^2$. For any given point $(\dot x(t), \dot z(t))\in\R^2$ we will thus be able to find a loop in $\mathcal R_{t,\varepsilon}$ having $(\dot x(t), \dot z(t))$ as its barycenter. This fact is sometimes referred to as the fundamental lemma of convex integration (see for instance~\cite[Prop.~2.11, p.~28]{MR3024860}). In the particular case studied here we obtain an explicit formula for $\gamma$: 
\begin{lemma}\label{loops}
There exists a family of loops $\gamma\in C^\infty([0,2\pi]\times S^1,\R^2)$ satisfying $\gamma(t,\cdot)\in \mathcal R_{t,\varepsilon}$ and such that
\begin{equation}\label{barycenter}
\frac{1}{2\pi}\oint_{S^1}\gamma(t,s)\,\mathrm ds = (\dot x(t), \dot z(t)), 
\end{equation}
for all $t \in [0,2\pi]$. 
\end{lemma}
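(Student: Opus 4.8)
The plan is to remove the line direction $v=y(t)u$ by the (smooth, $t$-dependent) shear $(u,v)\mapsto(u,\xi)$ with $\xi\coloneqq v-y(t)u$. This map is linear, hence preserves barycenters and smoothness, and carries $\mathcal R_{t,\varepsilon}$ to the region $\mathcal S_{t,\varepsilon}\coloneqq\{(u,\xi):|\xi|\le\varepsilon\min\{|u|,|u|^2\}\}$, while the target $(\dot x(t),\dot z(t))$ becomes $(\dot x(t),w(t))$ with $w(t)\coloneqq\dot z(t)-y(t)\dot x(t)$. Thus it suffices to construct a smooth family of loops $s\mapsto(u(s),\xi(s))$ in $\mathcal S_{t,\varepsilon}$ whose $u$-average is $\dot x(t)$ and whose $\xi$-average is $w(t)$; transforming back via $v=\xi+y(t)u$ then produces $\gamma$.

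First I would secure the $u$-average for free by choosing an odd excursion, e.g. $u(s)=\dot x(t)+\rho\sin s$, so that $\frac1{2\pi}\oint_{S^1}u\,\d s=\dot x(t)$ for every amplitude $\rho$. The subtlety is the pinch of $\mathcal S_{t,\varepsilon}$ at $u=0$, which forces $\xi$ to vanish there; to keep everything smooth I would replace the non-smooth width $\min\{|u|,|u|^2\}$ by the smooth lower bound $m(u)\coloneqq u^2/\sqrt{1+u^2}$, which satisfies $0\le m(u)\le\min\{|u|,|u|^2\}$ together with $m(u)\ge|u|-1$. I then set $\xi(s)=\varepsilon\,c(t)\,m(u(s))$ with a factor $c(t)$ constant in $s$: the bound $|c(t)|\le1$ guarantees $|\xi(s)|\le\varepsilon\,m(u(s))\le\varepsilon\min\{|u(s)|,|u(s)|^2\}$, so the loop stays in $\mathcal S_{t,\varepsilon}$ automatically, and its $\xi$-average equals $\varepsilon\,c(t)\,M(t)$ where $M(t)\coloneqq\frac1{2\pi}\oint_{S^1}m(u(s))\,\d s>0$.

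To hit the target I would solve $\varepsilon\,c(t)\,M(t)=w(t)$ by setting $c(t)=w(t)/(\varepsilon M(t))$, which is admissible exactly when $\varepsilon M(t)\ge|w(t)|$. This inequality is ampleness made quantitative and is the main point to secure: from $m(u)\ge|u|-1$ one obtains $M(t)\ge\frac{2\rho}{\pi}-|\dot x(t)|-1$, so by compactness of $[0,2\pi]$ — hence boundedness of $\dot x,\dot z,y$ and of $w$ — a single sufficiently large \emph{constant} amplitude $\rho$ makes $\varepsilon M(t)\ge|w(t)|$ for all $t$ at once. The remaining verifications are routine: $M(t)>0$ since $u(s)$ vanishes only at isolated $s$; $M$ and $c$ are smooth in $t$ by differentiation under the integral sign and smoothness of $\dot x,\dot z,y$; and
$$\gamma(t,s)=\Big(\dot x(t)+\rho\sin s,\ y(t)\big(\dot x(t)+\rho\sin s\big)+\varepsilon\,c(t)\,m\big(\dot x(t)+\rho\sin s\big)\Big)$$
is $2\pi$-periodic in $s$ and jointly smooth. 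Undoing the shear yields $\frac1{2\pi}\oint_{S^1}\gamma(t,s)\,\d s=(\dot x(t),\,w(t)+y(t)\dot x(t))=(\dot x(t),\dot z(t))$ and $|v-y(t)u|=|\xi|\le\varepsilon\min\{|u|,|u|^2\}$, i.e. $\gamma(t,\cdot)\in\mathcal R_{t,\varepsilon}$, as required.
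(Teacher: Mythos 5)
Your proof is correct and follows essentially the same route as the paper: both take a large-amplitude sinusoid $\dot x(t)+\rho\sin s$ as the first coordinate, make the second coordinate equal to $y(t)u$ plus a $t$-dependent multiple of a fixed profile function of $u$, determine that multiple from the barycenter condition, and secure $\gamma(t,\cdot)\in\mathcal R_{t,\varepsilon}$ by taking the amplitude large, uniformly in $t$ by compactness. The only difference is technical: the paper uses the profile $u^2$, whose average $\tfrac{r^2}{2}+\dot x(t)^2$ is in closed form and yields the explicit coefficient $\tfrac{2(\dot z-y\dot x)}{r^2+2\dot x^2}$, whereas your smooth minorant $u^2/\sqrt{1+u^2}$ makes membership in the relation automatic at the price of leaving the normalization $M(t)$ as an (unproblematic) implicit integral.
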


\begin{proof}
The map $\gamma\coloneqq(\gamma_1,\gamma_2)$, where 
$$
\gamma_1(t,s)\coloneqq r \cos s + \dot x(t)
$$ 
and
$$
\gamma_2(t,s):= \gamma_1(t,s)\left(y(t)+\frac{2(\dot z(t)-y(t)\dot x(t))}{r^2+2\dot x(t)^2}\gamma_1(t,s)\right)
$$
satisfies \eqref{barycenter} for every $r>0$. If $r$ is large enough one obtains $\gamma(t,\cdot)\in \mathcal R_{t,\varepsilon}$, where $r$ can be chosen independently of $t$ by compactness of $[0,2\pi]$. 
\end{proof}
We now have:
\begin{proof}[Proof of Proposition \ref{ruedi}]
With the definitions above we obtain
$$b(t):=\frac{\dot{c}(t)}{\dot{a}(t)}=y(t)+\frac{2(\dot z(t)-y(t)\dot x(t))}{r^2+2\dot x(t)^2}\gamma_1(t,nt).$$
We are left to show that $\left|(a, c)-(x,z)\right|\leqslant \varepsilon$ provided $n$ is large enough. This follows from the following estimate
\begin{equation}
\left\|(a, c)-(x,z)\right\|_{C^0([0,2\pi])}\leqslant \frac{4\pi^2}{n}\|\gamma\|_{C^1([0,2\pi]\times S^1)}\label{ac}.
\end{equation}
The estimate is in fact a geometric property of the derivative and can be interpreted as follows: Since $(\dot{a},\dot{c})$ and $(\dot x, \dot z)$ coincide ``in average'' on shorter and shorter intervals when $n$ gets bigger and bigger, $(a,c)$ and $(x,z)$ tend to become close: Let
$$I_k:=\left[\frac{2\pi k}{n},\frac{2\pi(k+1)}{n}\right]\text{ for }k=0,\ldots, \left\lfloor \frac{nt}{2\pi}\right\rfloor-1\text{ and }J\coloneqq\left[\left\lfloor \frac{nt}{2\pi}\right\rfloor\frac{2\pi}{n},t\right].$$
Then we can estimate $D=\left|(a(t), c(t))-(x(t),z(t))\right|$:
$$\begin{aligned}
D  = &\left|\int_0^t\gamma(u,nu)\,\mathrm du-\int_0^t (\dot x,\dot z)(u)\,\mathrm du\right|\\
 \leqslant &\sum_{k=0}^{\left\lfloor \frac{nt}{2\pi}\right\rfloor-1}\left|\int_{I_k}\gamma(u,nu)\,\mathrm du - \int_{I_k}\frac{1}{2\pi}\int_0^{2\pi}\gamma(u,v)\,\mathrm dv\,\mathrm du\right|+\\
 &+\int_{J}\left(\left|\gamma(u,nu)\right|+\|\gamma\|_{C^0([0,2\pi]\times S^1)}\right)\,\mathrm du \\
 \leqslant &\sum_{k=0}^{\left\lfloor \frac{nt}{2\pi}\right\rfloor-1}\left|\frac{1}{n}\int_0^{2\pi}\gamma\left(\frac{v+2k\pi}{n},v\right)\,\mathrm dv - \int_{I_k}\frac{1}{2\pi}\int_0^{2\pi}\gamma(u,v)\,\mathrm dv\,\mathrm du\right|+\\
 &+\frac{4\pi}{n}\|\gamma\|_{C^0([0,2\pi]\times S^1)}\\
 \leqslant &\sum_{k=0}^{\left\lfloor \frac{nt}{2\pi}\right\rfloor-1}\left|\frac{1}{2\pi} \int_{I_k}\int_0^{2\pi}\left(\gamma\left(\frac{v+2k\pi}{n},v\right)-\gamma(u,v)\right)\,\mathrm dv\,\mathrm du\right|+\frac{4\pi}{n}\|\gamma\|_{C^0([0,2\pi]\times S^1)}\\
 \leqslant &\left\lfloor \frac{nt}{2\pi}\right\rfloor\frac{4\pi^2}{n^2}\|\partial_t\gamma\|_{C^0([0,2\pi]\times S^1)}+\frac{4\pi}{n}\|\gamma\|_{C^0([0,2\pi]\times S^1)}\\
 \leqslant &\frac{4\pi}{n}\left(\pi\|\partial_t\gamma\|_{C^0([0,2\pi]\times S^1)}+\|\gamma\|_{C^0([0,2\pi]\times S^1)}\right).
\end{aligned}$$
By construction, the curve $(a,b,c)$ is Legendrian and an approximation of $(x,y,z)$, provided $n$ is large enough.\end{proof}
Next we show that we can approximate closed curves by closed Legendrian curves.
\begin{prop}
Let $\upsilon\in C^0(S^1,\R^3)$. For every $\varepsilon>0$ there exists a Legendrian curve $\eta\in C^\infty(S^1,\R^3)$ such that $\|\upsilon-\eta\|_{C^0(S^1)}\leqslant\varepsilon$. 
\end{prop}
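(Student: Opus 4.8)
The plan is to reduce to a smooth $2\pi$-periodic curve and then re-run the explicit construction of Proposition~\ref{ruedi}, paying attention to whether the resulting curve closes up. First I would replace $\upsilon$ by a smooth curve $(x,y,z)\in C^\infty(S^1,\R^3)$ that is $\varepsilon/2$-close in $C^0$, using periodic mollification, which preserves periodicity. Applying Lemma~\ref{loops} to this periodic data and taking $n\in\N$, I set $(a,c)(t)=(x(0),z(0))+\int_0^t\gamma(u,nu)\,\mathrm{d}u$ and $b=\dot c/\dot a$, obtaining a smooth Legendrian curve on $[0,2\pi]$ that is $C^0$-close to $(x,y,z)$ by the estimate~\eqref{ac}. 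The only thing left to arrange is that $\eta=(a,b,c)$ descends to a map on $S^1$, that is, that all three components are $2\pi$-periodic.

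Next I would examine the three components separately. Since $\gamma_1(t,s)=r\cos s+\dot x(t)$ and $n\in\N$, one has $\int_0^{2\pi}\gamma_1(u,nu)\,\mathrm{d}u=\int_0^{2\pi}\dot x\,\mathrm{d}u=0$, so $a(2\pi)=a(0)$; and because $\gamma_1(2\pi,2\pi n)=\gamma_1(0,0)$, together with periodicity of $y,\dot x,\dot z$, we get $b(2\pi)=b(0)$, so $b$ closes up as well. Hence the sole obstruction is the $z$-gap $\Delta_n\coloneqq\int_0^{2\pi}\gamma_2(u,nu)\,\mathrm{d}u=c(2\pi)-c(0)$. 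Writing $\gamma_2=y\gamma_1+\beta\gamma_1^2$ with $\beta(t)=2(\dot z-y\dot x)/(r^2+2\dot x^2)$, the non-oscillatory part of this integral is exactly $\int_0^{2\pi}[y\dot x+\beta(\tfrac{r^2}{2}+\dot x^2)]\,\mathrm{d}u=\int_0^{2\pi}\dot z\,\mathrm{d}u=0$, so $\Delta_n$ reduces to oscillatory integrals of smooth periodic functions against $\cos(nu)$ and $\cos(2nu)$; in particular $\Delta_n\to0$ as $n\to\infty$ (indeed $\Delta_n=O(1/n)$ already by~\eqref{ac}).

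To close up $z$ exactly I would perturb the loop by a constant, replacing $\beta$ by $\beta+\lambda$, i.e.\ $\gamma_2$ by $\gamma_2+\lambda\gamma_1^2$ (equivalently $b$ by $b+\lambda\,\gamma_1(t,nt)$), with $\lambda\coloneqq-\Delta_n\left(\int_0^{2\pi}\gamma_1(u,nu)^2\,\mathrm{d}u\right)^{-1}$. Since $\int_0^{2\pi}\gamma_1(u,nu)^2\,\mathrm{d}u=\pi r^2+\int_0^{2\pi}\dot x^2\,\mathrm{d}u+O(r/n)>0$, this $\lambda$ is well defined, the new $z$-gap vanishes by construction, $a$ is unchanged, and $b$ stays periodic (the correction is a constant times a periodic function). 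Moreover $|\lambda|=O(\Delta_n/r^2)$, so the perturbation is tiny: it shifts the second component of the barycenter by $\lambda(\tfrac{r^2}{2}+\dot x^2)=O(\Delta_n)$, whence by the estimate~\eqref{ac} applied to the modified family the new $(a,c)$ still approximates $(x,z)$ up to an error $O(\Delta_n)+O(\|\gamma\|_{C^1}/n)\to0$.

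The step I expect to be the main obstacle is verifying that the perturbed loop still lies in $\mathcal R_{t,\varepsilon}$, since this is what keeps the curve Legendrian-admissible and $b$ close to $y$. Membership amounts to $|\beta+\lambda|\,\gamma_1^2\leqslant\varepsilon\min\{|\gamma_1|,\gamma_1^2\}$, and Lemma~\ref{loops} already provides this with room to spare for $|\beta|=O(1/r^2)$ once $r$ is large; adding $|\lambda|=O(\Delta_n/r^2)$ preserves the inequality. Concretely I would first fix $r$ large enough that the unperturbed loop sits strictly inside $\mathcal R_{t,\varepsilon}$ uniformly in $t$ and the base approximation is below $\varepsilon/2$, and only then send $n\to\infty$, so that $\Delta_n$, $\lambda$, and the induced error all become negligible. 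The resulting $\eta=(a,b,c)$ is then smooth, Legendrian, $2\pi$-periodic, hence a map $S^1\to\R^3$, and $C^0$-close to $\upsilon$, as required.
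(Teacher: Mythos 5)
Your proposal is correct and follows essentially the same route as the paper: your correction $\gamma_2\mapsto\gamma_2+\lambda\gamma_1^2$ with $\lambda=-\Delta_n\left(\int_0^{2\pi}\gamma_1^2(u,nu)\,\mathrm du\right)^{-1}$ is exactly the paper's subtraction of $\mathrm I_2 f$, where $f=\gamma_1^2(\cdot,n\cdot)/\|\gamma_1^2(\cdot,n\cdot)\|_{L^1([0,2\pi])}$, from the integrand defining $c$, and both arguments then verify periodicity of $a$ and $b$ and re-use the estimate \eqref{ac}. The only (immaterial) difference is bookkeeping of the parameters: the paper chooses $r$ large and $n\sim r^2$ quantitatively, while you fix $r$ first and then let $n\to\infty$.
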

\begin{proof} Using standard regularization, let the curve we wish to approximate be given by $(x,y,z)\in C^\infty([0,2\pi],\R^3)$, where the values of $(x,y,z)$ in $0$ and $2\pi$ agree to all orders. Define $g(t)\coloneqq \gamma_1^2(t,nt)$. Since  $\|g\|_{L^1([0,2\pi])}=O(r^2)$ as $r\to\infty$, we can choose $r>0$ large enough such that $f\coloneqq g/\|g\|_{L^1([0,2\pi])}$ is well-defined. With the notation
$$\mathrm I_2\coloneqq \int_0^{2\pi}\gamma_2(u,nu)\,\mathrm du,$$
we define $\eta=(a,b,c)$ as follows:
\begin{align}
(a(t),c(t)) & \coloneqq (x(0),z(0)) + \int_0^t \bigg[\gamma(u,nu)-(0,\mathrm I_2f(u))\bigg]\,\mathrm du,\label{c}\\
b(t) & \coloneqq \frac{\dot c(t)}{\dot a(t)} = y(t) + \gamma_1(t,nt)\left(\frac{2(\dot z(t)-y(t)\dot x(t))}{r^2+2\dot x(t)^2}-\frac{\mathrm I_2}{\|g\|_{L^1([0,2\pi])}}\right).\label{b}
\end{align}
A straightforward computation shows that the values of $(a,b,c)$ in $0$ and $2\pi$ agree to all orders, hence $\eta\in C^\infty(S^1,\R^3)$ and it is Legendre by construction. Using \eqref{ac} we obtain $|\mathrm I_2|\leqslant \frac{4\pi^2}{n}\|\gamma_2\|_{C^1([0,2\pi]\times S^1)}$, hence we find using \eqref{b} as $r\to\infty$:
$$
\begin{aligned}
\|b-y\|_{C^0([0,2\pi])} & \leqslant \|\gamma_1\|_{C^0([0,2\pi]\times S^1)}\left(1+\frac{1}{n}\|\gamma\|_{C^1([0,2\pi]\times S^1)}\right)O(r^{-2}).
\end{aligned}
$$
For the remaining components we find find using \eqref{ac} and \eqref{c} the uniform bound
$$\begin{aligned}
|(a(t),c(t))-(x(t),z(t))| & \leqslant \frac{4\pi^2}{n}\|\gamma\|_{C^1([0,2\pi]\times S^1)} + \frac{\left|\mathrm I_2\right|}{\|g\|_{L^1([0,2\pi])}}\int_0^t g(u)\,\mathrm du
\\&\leqslant \frac{8\pi^2}{n}\|\gamma\|_{C^1([0,2\pi]\times S^1)}.\end{aligned}$$
Choosing $r$ large enough and $n\sim r^2$ concludes the proof.
\end{proof}
We show now how to glue together two local approximations of a curve $\Gamma$ in $M$ on two intersecting coordinate neighborhoods. Let therefore $U_\sigma$ and $U_\tau$ in $M$ be coordinate patches such that $U=U_\sigma\cap U_\tau\ne\emptyset$. Let $I_\sigma$ and $I_\tau$ be compact intervals such that $I=I_\sigma\cap I_\tau$ contains an open neighborhood of $t=0$ (after shifting the variable $t$ if necessary) and such that $\Gamma(I_\sigma)\subset U_\sigma$, $\Gamma(I_\tau)\subset U_\tau$. Assume without restriction that $\Gamma$ is smooth and let $(x,y,z)$ represent $\Gamma$ on $U$. Suppose that $(x,y,z)$ is approximated by Legendrian curves $\sigma:I_\sigma\to\R^3$ and $\tau:I_\tau\to \R^3$ such that\begin{equation}
\|\sigma-(x,y,z)\|_{C^0(I)}<\varepsilon^2,\quad \|\tau-(x,y,z)\|_{C^0(I)}<\varepsilon^2\label{sigmatauclose}
\end{equation}
for some fixed $0<\varepsilon<\frac{1}{2}$. For $r>0$, define $R(r)$ to be the smallest number such that $\bar B_r(0)\subset\operatorname{conv}\left(\mathcal R_{0,\varepsilon}\cap \bar B_{R}(0)\right)$. Note that $R$ depends continuously on $r$ and if $r>r_0\coloneqq\varepsilon/\sqrt{1+y(0)^2}$, then
\begin{equation}
R(r) = \frac{r}{\varepsilon}\sqrt{(1+y(0)^2)\left(1+(|y(0)|+\varepsilon)^2\right)}\eqqcolon\frac{r}{\varepsilon}w(y(0),\varepsilon)\label{formulaforR}.
\end{equation}
Choose $0<\delta<\varepsilon^2$ such that $[-\delta,\delta]\subset I$ and such that $\delta\|(x,y,z)\|_{C^1(I)}\leqslant \varepsilon^2$
and define
$$\begin{aligned}
p_1&\coloneqq(\sigma_1(-\delta),\sigma_3(-\delta)),\\
\dot p_1&\coloneqq(\dot\sigma_1(-\delta),\dot\sigma_3(-\delta)),\\
 p_2&\coloneqq(\tau_1(\delta),\tau_3(\delta)),\\
\dot p_2&\coloneqq(\dot\tau_1(\delta),\dot\tau_3(\delta)).
\end{aligned}$$
From \eqref{sigmatauclose} and the choice of $\delta$ we obtain $\dot p_1,\dot p_2\in \mathcal C_{\varepsilon}\coloneqq\big\{(u,v)\in\R^2,|v-y(0)u|\leqslant\varepsilon|u|\big\}$ and
$$
\frac{p_2-p_1}{2\delta}\eqqcolon  p\in B_{\bar r}(0),\text{where }\bar r=\frac{2\varepsilon^2}{\delta}.$$
Since $3\bar r>r_0$, we can express $R(3\bar r)$ by means of formula \eqref{formulaforR}. This will be used in computation \eqref{gianni29}. 
We construct a path $\gamma=(\gamma_1,\gamma_2):[-\delta,\delta]\to \mathcal C_{\varepsilon}$ as follows: For $\rho<\delta/2$, let $\gamma|_{[-\delta,-\delta+\rho]}$ be a continuous path from $\dot p_1$ to $0$ and let $\gamma|_{[\delta-\rho,\delta]}$ be a continuous path from $0$ to $\dot p_2$. We construct $\gamma$ such that the quotient $\gamma_2/\gamma_1$ is well-defined on $[-\delta,-\delta+\rho]\cup [\delta-\rho,\delta]$ and equals $y(0)$ in $t=-\delta+\rho$ and $t=\delta-\rho$. Moreover, we require that
\begin{equation}\int_{-\delta}^{-\delta+\rho}|\gamma(t)|\,\mathrm dt < \frac{\delta\varepsilon}{2}\text{ and }\int_{\delta-\rho}^{\delta}|\gamma(t)|\,\mathrm dt < \frac{\delta\varepsilon}{2}.\label{paths}\end{equation}
On $[-\delta,-\delta+\rho],$ such a path is for example given by
$$
t\mapsto \left(1-\frac{\delta + t}{\rho}\right)^k\begin{pmatrix}\dot\sigma_1(-\delta) \\ y(0)\dot\sigma_1(-\delta) + (\dot\sigma_3(-\delta) - y(0)\dot\sigma_1(-\delta))\left(1-\frac{\delta + t}{\rho}\right)^{k}\end{pmatrix}
$$
provided $k\in \N$ is sufficiently large. We obtain
$$
\frac{1}{2(\delta-\rho)}\left(2\delta p-\int_{-\delta}^{-\delta+\rho}\gamma(t)\,\mathrm dt-\int_{\delta-\rho}^{\delta}\gamma(t)\,\mathrm dt\right)\eqqcolon \bar p\in B_{3\bar r}(0)
$$
and hence $\bar p\in \operatorname{int}\operatorname{conv}(B_{R(3\bar r)}(0)\cap\mathcal R_{0,\varepsilon})$. Using the fundamental lemma of convex integration we let $\gamma|_{[-\delta+\rho,\delta-\rho]}$ be a continuous closed loop in $B_{R(3\bar r)}(0)\cap\mathcal R_{0,\varepsilon}$ based at $0$ such that
$$
\frac{1}{2(\delta-\rho)}\int_{-\delta+\rho}^{\delta-\rho}\gamma(t)\,\mathrm dt = \bar p.$$
With these definitions we obtain
$$
\frac{1}{2\delta}\int_{-\delta}^{\delta}\gamma(t) = p.
$$
Now we define $\eta=(a,b,c):[-\delta,\delta]\to\R^3$ by letting $b(t)\coloneqq \dot c(t)/\dot a(t)$, where
$$\begin{aligned}
(a,c)(t) & \coloneqq p_1+\int_{-\delta}^t \gamma(u)\mathrm du.
\end{aligned}$$
The curve $\eta$ is well-defined and Legendrian by construction. It satisfies $\eta(-\delta)=\sigma(-\delta)$ and $\eta(\delta)=\tau(\delta)$. Moreover, $(a,c)$ and $(\sigma_1,\sigma_3)$ agree to first order in $t=-\delta$ and so do $(a,c)$ and $(\tau_1,\tau_3)$ in $t=\delta$. From $\gamma([-\delta,\delta])\in \mathcal C_\varepsilon$ and the choice of $\delta$ we find
\begin{equation}\label{luxusiglu}
|b(t)-y(t)|\leqslant|b(t)-y(0)|+|y(t)-y(0)|\leqslant \varepsilon + \delta\|y\|_{C^1(I)}<2\varepsilon.
\end{equation}
Using \eqref{sigmatauclose}, \eqref{formulaforR}, \eqref{paths} and the choice of $\delta$ we obtain for the remaining components the uniform bound
\begin{equation}
\begin{aligned}\label{gianni29}
|(a,c)(t)-(x,z)(t)| & \leqslant |p_1-(x,z)(-\delta)|+\int_{-\delta}^t \left(|\gamma(u)|+|(\dot x,\dot z)(u)|\right)\,\mathrm du\\
& \leqslant \varepsilon^2 + \delta\varepsilon +\int_{-\delta+\rho}^{\delta-\rho}|\gamma(u)|\,\mathrm du + 2\delta\|(x,z)\|_{C^1(I)}\\
& \leqslant 2\varepsilon + 2\delta R(3\bar r) \\
& \leqslant \varepsilon \left(14+12\left(|y(0)|+\frac{1}{2}\right)^2\right).
\end{aligned}
\end{equation}
Finally, suppose $\upsilon$ is a continuous curve from a compact $1$-manifold $N$ (that is, $N$ is a compact interval or $S^1$) into a contact $3$-manifold $(M,\xi)$. We fix some Riemannian metric $g$ on $M$. Then it follows with the bounds~(\ref{luxusiglu},\ref{gianni29}) and the compactness of the domain of $\upsilon$ that for every $\varepsilon>0$ there exists a $\xi$-Legendrian curve $\eta$ such that  
$$
\sup_{t \in N} d_{g}(\upsilon(t),\eta(t))<\varepsilon, 
$$
where $d_{g}$ denotes the metric on $M$ induced by the Riemannian metric $g$. In particular, every open neighborhood of $\upsilon \in C^0(N,M)$ -- equipped with the uniform topology -- contains a Legendrian curve $N \to M$. Since $N$ is assumed to be compact the uniform topology is the same as the Whitney $C^{0}$-topology, thus proving Theorem~\ref{main}. 

\section{Examples}
\begin{example}[Parallel Parking]\label{parallel}
The trajectory of a car moving in the plane can be thought of as a curve $[0,2\pi]\to S^1\times \R^2$. Denoting by $(\varphi,a,c)$ the natural coordinates on $S^1\times \R^2$, the angle coordinate $\varphi$ denotes the orientation of the car with respect to the $a$-axis and the coordinates $(a,c)$ the position of the car in the plane. Admissible motions of the car are curves satisfying 
$$
\dot a\sin \varphi = \dot c \cos \varphi.
$$ 
The manifold $S^1\times \R^2$ together with the contact structure defined by the kernel of the $1$-form $\theta:=\sin \varphi\,\d a - \cos \varphi\, \d c$ is a contact 3-manifold. Indeed, we have
$$
\theta\wedge\d\theta=-\cos^2\!\varphi\,\d\varphi\wedge\d a \wedge \d c-\sin^2\!\varphi\,\d\varphi\wedge\d a \wedge \d c=-\d\varphi\wedge\d a \wedge \d c\neq 0. 
$$
Applying Theorem~\ref{main} with $b=\tan\varphi$ gives an explicit approximation of the curve 
$$
t\mapsto (x(t),y(t),z(t))=(0,0,t).
$$ 
Lemma~\ref{loops} gives the loop 
$$
\gamma(t,s)=2(r \cos s,\cos^2s),
$$ 
and hence the desired Legendrian curve 
$$
\left(\arccot(r \sec (n t)),2 r t \sinc(n t),t + t\sinc(2 n t)\right),
$$
provided $r$ is large enough and $n\sim r^2$.
\end{example}
\begin{figure}
\begin{center}
\includegraphics[scale=0.45]{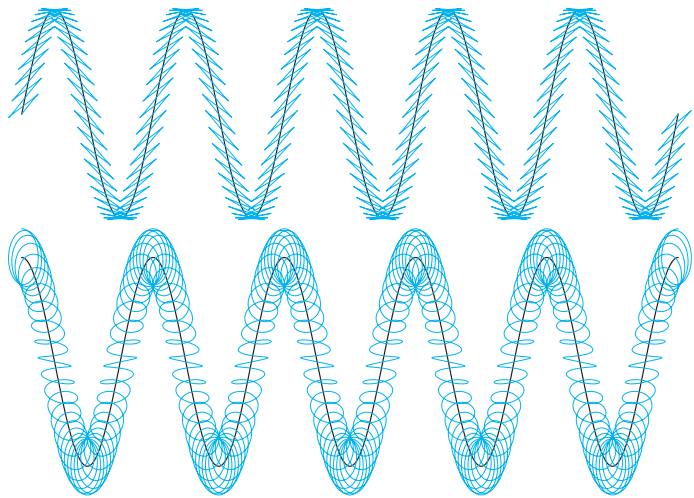}
\caption{The front (top) and the Lagrangian projection (bottom) of the Legendrian approximation of $\upsilon$.}\label{642}
\end{center}
\end{figure}
\begin{example}[Legendrian Helix]\label{helix}
The Legendrian approximation of the helix 
$$
\upsilon : [0,2\pi] \to \R^3, \quad t\mapsto (t,\cos (5t),\sin(5t)),
$$
with $n=\frac{2}{9}r^2$ and $r=30$ is given by
\begin{align*}
a(t) =& ~ t + \frac{3}{20} \sin(200 t)\\ b(t) =&~ \frac{455}{451} \cos(5 t) +\frac{120}{451}\cos(5t) \cos(200 t) \\ c(t)  =&~
\sin(5 t) + \frac{459}{5863} \sin(195 t) + \frac{1377}{18491} \sin(205 t)+\frac{180}{35629} \sin(395 t)+\\ &~ + \frac{20}{4059}\sin(405 t).
\end{align*}
and produces the zig-zags and the small loops in its front and Lagrangian projections (see Figure \ref{642}).
\end{example}

\providecommand{\bysame}{\leavevmode\hbox to3em{\hrulefill}\thinspace}
\providecommand{\MR}{\relax\ifhmode\unskip\space\fi MR }
\providecommand{\MRhref}[2]{%
  \href{http://www.ams.org/mathscinet-getitem?mr=#1}{#2}
}
\providecommand{\href}[2]{#2}

\end{document}